\newcommand{\pn}{\par\noindent}
\newcommand{\pmn}{\par\medskip\noindent}
\newtheorem{prop}{Proposition}[section]
\theoremstyle{definition} 
\newtheorem{defin}{Definition}[section]
 \theoremstyle{remark}
\newtheorem{rem}{Remark}[section]
\begin{document}
\title{Even and odd trees}
\author{Irina Busjatskaja \and Yury Kochetkov}
\begin{abstract} In this paper we at first consider plane trees
with the root vertex and a marked directed edge, outgoing from
the root vertex. For such trees we introduce a new characteristic
--- the \emph{parity}, using the bracket code. It turns out that
the parity depends only on the root vertex (not on the marked edge).
And in the case of an even number of vertices the parity does not
depend on the root vertex also.   Then we consider rotation
groups of bipartite trees, study their properties and prove that
in the case of even number of vertices rotation groups of even and
odd trees are different. \end{abstract}
\email{ibusjatskaja@hse.ru, yukochetkov@hse.ru} \maketitle
\section{Introduction}
\pn Let us consider a tree with $n$ vertices embedded into the
plane. One of vertices will be a root vertex. Such tree will be
represented on the plane as "growing" downwards from the root
vertex. The leftmost edge outgoing from the root vertex will be
called the marked edge. Such trees will be called pr-trees (i.e.
plane rooted trees). The root vertex will be the vertex of level
0, vertices adjacent to the root will be vertices of level 1, and
so on.  \pmn Vertices of level 1 are root vertices of pr-trees
(subtrees) of the original pr-tree. By a \emph{level one
permutation} we will understand any change of the order of level 1
vertices (and corresponding pr-trees). \pmn We can describe
pr-trees using proper bracket structures (bracket codes) with the
number of opening brackets (and the number of closing brackets)
equal to the number of vertices. A bracket structure is proper, if
the number of opening brackets is not less, than the number of
closing ones in any left segment of this structure. The bracket
structure is constructed in the following way: beginning at root
vertex, we make the counterclockwise walkabout (or simply the
walk) around the tree, i.e. the tree is always on our left. The
length of this walkabout (measured by edges) is $2(n-1)$, where
$n$ is the number of vertices: we pass each edge twice in two
opposite directions. When we meet a vertex for the first time we
open bracket. When we meet a vertex for the last time we close the
bracket. The first opening bracket and the last closing bracket
correspond to the root vertex.  Here is an example of this
correspondence.
\[\begin{picture}(110,40) \put(0,25){\circle*{2}}
\put(10,15){\circle*{2}} \multiput(10,35)(10,-10){4}{\circle*{2}}
\put(0,25){\line(1,1){10}} \put(10,15){\line(1,1){10}}
\put(10,35){\line(1,-1){30}} \put(60,20){$\Rightarrow$}
\put(90,20){\small (()(()(())))} \end{picture}\]

\begin{center}{\large Figure 1}\end{center}

\begin{defin} A pair "closing bracket before opening bracket (not
necessary adjacent)" in a bracket structure is called an
\emph{inversion}.
\end{defin}

\begin{rem} There are 6 inversions in the bracket code of the tree in
the Figure 1. \end{rem}

\begin{defin} A pr-tree is called \emph{even}, if the number of
inversions in the corresponding bracket structure is even, and is
called \emph{odd} in the opposite case. \end{defin}

\begin{rem} The tree in the Figure 1 is even. \end{rem}
\pmn The following properties of the parity will be proved.

\begin{itemize} \item A level one permutation does not change the
parity of a pr-tree (Proposition 2.1). \item If the number of
vertices $n$ is even, then the parity does not depend on the
choice of the root vertex (Proposition 2.2).\end{itemize}

\begin{rem} From propositions 2.1 and 2.2 it follows that if two pr-trees
with even number of vertices are graph-isomorphic, then they have
the same parity. \end{rem}

\pmn Let $A$, $B$ and $C$ be three vertices of a pr-tree (see
Figure 2), such that: a) the degree of $B$ is one; b) during the
walk we meet vertices in the sequence $A,B,A,C$ as in the figure
below.
\[\begin{picture}(140,70) \put(10,45){\circle*{2}}
\put(30,45){\circle*{2}} \put(30,25){\circle*{2}}
\put(10,45){\line(1,0){20}} \put(30,15){\line(0,1){40}}
\put(29,0){$\vdots$} \put(29,60){$\vdots$} \put(2,42){\scriptsize
B} \put(35,42){\scriptsize A} \put(35,22){\scriptsize C}

\put(60,32){or}

\put(100,25){\circle*{2}} \put(100,45){\circle*{2}}
\put(100,15){\line(0,1){40}} \put(99,0){$\vdots$}
\put(99,60){$\vdots$} \put(90,22){\scriptsize A}
\put(90,42){\scriptsize C} \put(130,25){\circle*{2}}
\put(100,25){\line(1,0){30}} \put(134,22){\scriptsize B}
\end{picture}\]
\begin{center}{\large Figure 2}\end{center}
\pmn
\begin{defin} A \emph{transposition} is a translation of the vertex
$B$ with the attached edge along one(!) edge of the tree (as in
the walkabout) from vertex $A$ to the next vertex $C$ (as in the
Figure 3).
\[\begin{picture}(180,70) \put(10,45){\circle*{2}}
\put(30,45){\circle*{2}} \put(30,25){\circle*{2}}
\put(10,45){\line(1,0){20}} \put(30,15){\line(0,1){40}}
\put(29,0){$\vdots$} \put(29,60){$\vdots$} \put(2,42){\scriptsize
B} \put(35,42){\scriptsize A} \put(35,22){\scriptsize C}
\put(30,45){\line(-1,1){10}} \put(30,25){\line(-1,-1){10}}
\multiput(18,57)(-3,3){3}{\circle*{1}}
\multiput(18,13)(-3,-3){3}{\circle*{1}}

\put(60,30){$\xrightarrow{\text{\small transposition}}$}

\put(142,22){\scriptsize B} \put(150,25){\circle*{2}}
\put(170,45){\circle*{2}} \put(170,25){\circle*{2}}
\put(150,25){\line(1,0){20}} \put(170,15){\line(0,1){40}}
\put(169,0){$\vdots$} \put(169,60){$\vdots$}
\put(175,42){\scriptsize A} \put(175,22){\scriptsize C}
\put(170,45){\line(-1,1){10}} \put(170,25){\line(-1,-1){10}}
\multiput(158,57)(-3,3){3}{\circle*{1}}
\multiput(158,13)(-3,-3){3}{\circle*{1}} \end{picture}\] \pmn
\begin{center}{\large Figure 3}\end{center}
\pmn The transformation of the right tree in Figure 2 is
analogous.
\end{defin}
\pn It will be proved (Proposition 3.1) that a transposition
changes the parity.
\begin{rem} The standard chain-tree is obviously even (see Figure
4).
\[\begin{picture}(10,50) \put(5,5){\circle*{2}}
\put(5,15){\circle*{2}} \put(5,5){\line(0,1){15}}
\multiput(5,23)(0,3){3}{\circle*{1}} \put(5,32){\line(0,1){15}}
\put(5,37){\circle*{2}} \put(5,47){\circle*{2}}
\end{picture}\]
\begin{center}{\large Figure 4}\end{center}
\pmn Thus, we can determine the parity of any pr-tree by counting
the number of transpositions that transform our tree to the
chain-tree.\end{rem}
\begin{defin} A plane tree $T$ is a \emph{sum} of two plane trees
$T_1$ and $T_2$, if after deletion of some edge of $T$ we obtain
the disjoint union of $T_1$ and $T_2$.
\end{defin} \pn It will be proved (Proposition 4.1) that the sum
of two trees with even number of vertices each is odd only when
one of the summands is odd and another is even.
\begin{rem} By representing a tree as a sum, we can simplify the
determination of its parity.\end{rem} \pn Let us consider a tree
$T$ with $m$ edges. The \emph{passport} (see \cite{LZ}) of $T$ is
the list $[a_1,\ldots,a_m]$, where $a_i$, $i=1,\ldots,m$, is the
number of vertices of degree $i$. \pmn A bipartite tree is a tree
with vertices colored in two colors --- black and white so, that
adjacent vertices have different colors. The passport of a
bipartite tree with $m$ edges is the list
$[[a_1,\ldots,a_m],[b_1,\ldots,b_m]]$, where $a_i$,
$i=1,\ldots,m$, is the number of white vertices of degree $i$ and
$b_i$, $i=1,\ldots,m$, is the number of black vertices of degree
$i$.
\begin{defin} Let $T$ be a plane bipartite tree with $m$ edges, enumerated
in some arbitrary way. Let $M=\{1,2,\ldots,m\}$. The permutation
$s_w:M\to M$ is defined in the following way: $s_w(i)=j$, if
$i$-th and $j$-th edges are incident to the same white vertex $v$
and $j$-th edge is the next after $i$-th edge in the
counterclockwise going around $v$. The permutation $s_b$ is
defined in the same way, but with respect to black vertices.  The
rotation group (see \cite{LZ}) $R(T)$ of $T$ is the subgroup of
$S_m$, generated by permutations $s_w$ and $s_b$, i.e.
$R(T)=\langle s_w,s_b\rangle\subset S_m$.
\end{defin}
\begin{rem} The product $s_ws_b$ is an $m$-cycle.
Different numerations of edges give us conjugate rotation groups.
\end{rem}
\pn It will be proved (Propositions 5.1) that rotations groups of
two plane trees with the same even number of vertices, but of
different parities, are different.
\begin{defin} A bipartite tree is called \emph{clean}, if all its
black vertices have degree two (see \cite{LZ}). \end{defin} \pn
For any given tree $T$, we can construct a bipartite clean tree
$T^c$ by dividing each edge by black vertex into two edges
(initial vertices now are white). Let $T$ and $U$ be two plane
trees with $n$ vertices and $n$ is even. If $T$ and $U$ are of
different parities, then rotation groups of $T^c$ and $U^c$ are
different (Proposition 6.1). \pmn In Supplement 1 we demonstrate
that trees with different parities may be cospectral. \pmn In
Supplement 2 we introduce graph $G_n$ (for even n) --- the "graph
of trees" and study its planarity for small $n$.

\section{Even and odd rooted trees}
\pn
\begin{prop} A level one permutation does not change the parity of a
pr-tree $T$. \end{prop}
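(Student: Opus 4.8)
The plan is to derive an explicit recursive formula for the number of inversions of a pr-tree in terms of the sizes and inversion numbers of its level-one subtrees, and then observe that this formula is manifestly symmetric in those subtrees. Set up notation: let $v_1,\dots ,v_k$ be the level-one vertices of $T$ listed from left to right, let $T_i$ be the subtree hanging at $v_i$, let $n_i$ be its number of vertices, and let $w_i$ be its bracket code, a proper bracket word of length $2n_i$. The key structural fact is that the counterclockwise walk around $T$ starting at the root enters $T_1,\dots ,T_k$ in this left-to-right order and traverses each one completely before passing to the next (this is exactly what "the tree is on our left" forces, and it matches the example in Figure~1); hence the bracket code of $T$ is the concatenation $(\,w_1 w_2\cdots w_k\,)$, the first and last brackets being those of the root. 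I would state this carefully, since the whole argument rests on it.

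Next I would count the inversions of $(\,w_1\cdots w_k\,)$. The outermost opening bracket is the very first symbol and the outermost closing bracket the very last, so neither lies in any inversion. An inversion is therefore either internal to one block $w_i$ (there are $\mathrm{inv}(w_i)$ of those), or it is a pair consisting of a closing bracket of $w_i$ and an opening bracket of $w_j$: if $i>j$ the opening bracket comes first, so this is not an inversion, while a closing/opening pair in the same block is already counted among the internal ones. Hence the only cross-contributions come from pairs with $i<j$, and since $w_i$ has exactly $n_i$ closing brackets, $w_j$ exactly $n_j$ opening brackets, and every closing bracket of $w_i$ precedes every opening bracket of $w_j$, there are exactly $n_i n_j$ of them. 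This gives
\[
\mathrm{inv}(T)=\sum_{i=1}^{k}\mathrm{inv}(w_i)+\sum_{1\le i<j\le k} n_i n_j .
\]

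Finally, a level-one permutation replaces $(w_1,\dots ,w_k)$ by $(w_{\sigma(1)},\dots ,w_{\sigma(k)})$ for some $\sigma\in S_k$, so the new inversion number is $\sum_i \mathrm{inv}(w_{\sigma(i)})+\sum_{i<j} n_{\sigma(i)}n_{\sigma(j)}$; the first sum is unchanged by reindexing, and the second sum runs over all unordered pairs $\{i,j\}$ and is therefore also unchanged. Thus $\mathrm{inv}(T)$ itself, and in particular its parity, is invariant under level-one permutations. The only step that demands genuine care is the bookkeeping in the inversion count — verifying that the cross-contribution of an ordered pair $(T_i,T_j)$ with $i<j$ is exactly $n_i n_j$ and that there are no further cross-contributions or contributions from the root brackets; once the displayed formula is established, the conclusion is immediate from the symmetry of $\sum_{i<j}n_i n_j$.
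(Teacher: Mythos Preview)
Your proof is correct and follows essentially the same approach as the paper: both derive the formula $\mathrm{inv}(T)=\sum_i \mathrm{inv}(T_i)+\sum_{i<j}n_in_j$ and observe its symmetry in the level-one subtrees. Your version is more careful about justifying the formula and in fact shows that the inversion count itself, not merely its parity, is invariant.
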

\begin{proof} Let $v_0$ be the root vertex of a pr-tree $T$,
$v_1,\ldots,v_s$ be level one vertices (enumerated from left to
the right) and $T_1,\ldots,T_s$ be the corresponding pr-subtrees
(Figure 5).
\[\begin{picture}(150,70) \put(0,5){\line(1,0){30}}
\put(50,5){\line(1,0){30}} \put(120,5){\line(1,0){30}}
\put(0,22){\line(1,0){30}} \put(50,22){\line(1,0){30}}
\put(120,22){\line(1,0){30}} \put(0,5){\line(0,1){17}}
\put(30,5){\line(0,1){17}} \put(50,5){\line(0,1){17}}
\put(80,5){\line(0,1){17}} \put(120,5){\line(0,1){17}}
\put(150,5){\line(0,1){17}} \put(12,10){\small $T_1$}
\put(62,10){\small $T_2$} \put(132,10){\small $T_s$}
\put(75,55){\circle*{2}} \put(72,60){\small $v_0$}
\multiput(88,13)(8,0){4}{\circle*{2}} \put(15,25){\line(2,1){60}}
\put(65,25){\line(1,3){10}} \put(135,25){\line(-2,1){60}}
\end{picture}\]
\begin{center}{\large Figure 5}\end{center}
\pmn Let a subtree $T_i$ has $k_i$ vertices and its bracket
structure has $x_i$ inversions. Then the bracket structure of the
tree $T$ has
$$\sum_i x_i+\sum_{i<j}k_ik_j$$ inversions. If we interchange
subtrees $T_i$ and $T_{i+1}$, then the first sum will be the same
and in the second sum the term $k_ik_{i+1}$ will be replaced by
$k_{i+1}k_i$. Hence, the parity of a pr-tree does not depend on
the choice of marked edge. \end{proof}

\begin{prop} If a pr-tree has an even number of vertices $n$ then its
parity does not depend on the choice of root vertex. \end{prop}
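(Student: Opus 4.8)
The plan is to reduce the statement to re-rooting across a single edge and then to compute directly with bracket codes. By Proposition 2.1 the parity of a pr-tree does not depend on the marked edge, so once the root vertex is fixed the parity is determined; hence it suffices to prove that, when $n$ is even, moving the root from a vertex to an adjacent one leaves the parity unchanged. Since a tree is connected, any two of its vertices are joined by a path, and iterating the one-edge move along such a path then gives the proposition for an arbitrary change of root.

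First I would fix an edge $e=vw$ and write $T-e$ as the disjoint union of the subtree $A$ containing $v$, with $a$ vertices, and the subtree $B$ containing $w$, with $b$ vertices; thus $a+b=n$. Let $x_A\in\{0,1\}$ be the parity of the number of inversions of $A$ rooted at $v$, and let $x_B$ be the parity of the number of inversions of $B$ rooted at $w$; both are well defined by Proposition 2.1. The key step is the congruence
$$\#\{\text{inversions of }T\text{ rooted at }v\}\equiv x_A+x_B+(a-1)b\pmod 2,$$
together with its mirror image, obtained by swapping $v$ with $w$ and $a$ with $b$, for $T$ rooted at $w$.

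To establish the congruence, root $T$ at $v$ and use Proposition 2.1 to move the subtree $B$, the one reached through $e$, into the last position among the level one subtrees of $v$. The bracket code then has the form $(\alpha\beta)$, where $\beta$ is the bracket code of $B$ rooted at $w$ and $(\alpha)$ is a bracket code of $A$ rooted at $v$; hence $\mathrm{inv}(\alpha)\equiv x_A$ and $\mathrm{inv}(\beta)\equiv x_B$, while $\alpha$ has $a-1$ closing brackets and $\beta$ has $b$ opening brackets. Counting the inversions of $(\alpha\beta)$ according to where the two brackets of the pair lie --- both inside $\alpha$, both inside $\beta$, a closing bracket of $\alpha$ together with an opening bracket of $\beta$ (there are exactly $(a-1)b$ such pairs, every one of them an inversion), and the pairs involving one of the two outer brackets (none, since the code begins with ``$($'' and ends with ``$)$'') --- yields $\mathrm{inv}((\alpha\beta))=\mathrm{inv}(\alpha)+\mathrm{inv}(\beta)+(a-1)b$, which is the congruence.

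Subtracting the two versions of the congruence shows that the parities of $T$ rooted at $v$ and at $w$ differ by $(a-1)b-(b-1)a=a-b\equiv a+b=n\pmod 2$. When $n$ is even this difference vanishes, so the two parities coincide, and the induction along the path finishes the proof. The one point that really needs care is the inversion bookkeeping for the concatenation $(\alpha\beta)$ --- in particular verifying that the outer pair of brackets produces no inversion and that the only cross-contribution is $(a-1)b$; the degenerate cases in which $v$ or $w$ has degree one, so that $\alpha$ or $\beta$ is empty, are covered without change by the same formula.
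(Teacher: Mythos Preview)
Your proof is correct and follows essentially the same route as the paper: reduce to moving the root across a single edge and compare inversion counts of the two bracket codes. The only cosmetic difference is bookkeeping --- the paper splits off the two \emph{forests} hanging below $O$ and $P$ (sizes $k,l$ with $k+l+2=n$) and reads off the codes $(\,(\,A\,)\,B\,)$ versus $(\,A\,(\,B\,)\,)$ directly, obtaining $s+kl+l+t$ versus $s+k+kl+t$; you split $T-e$ into the two \emph{subtrees} (sizes $a,b$ with $a+b=n$) and use Proposition~2.1 to place $B$ last, obtaining the equivalent cross term $(a-1)b$. Under $a=l+1$, $b=k+1$ the two computations coincide.
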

\begin{proof} Let $O$ be the root vertex and $OP$ be the marked edge.
We make $P$ the new root vertex (and preserve the order of the walk).
Then the tree will be transformed in the following way:
\[\begin{picture}(320,80) \qbezier[35](0,5)(20,5)(40,5)
\qbezier[35](0,30)(20,30)(40,30) \qbezier[20](0,5)(0,20)(0,30)
\qbezier[20](40,5)(40,20)(40,30) \put(18,15){A}

\put(20,55){\circle*{3}} \put(20,55){\line(-1,-2){10}}
\put(20,55){\line(0,-1){20}} \put(20,55){\line(1,-2){10}}
\put(40,65){\circle*{3}} \put(37,70){\small O} \put(15,58){\small
P} \put(40,65){\line(-2,-1){20}}

\qbezier[50](60,5)(90,5)(120,5) \qbezier[50](60,30)(90,30)(120,30)
\qbezier[20](60,5)(60,20)(60,30)
\qbezier[20](120,5)(120,20)(120,30) \put(88,15){B}

\put(40,65){\line(1,-1){30}} \put(40,65){\line(5,-3){50}}
\qbezier(40,65)(75,50)(110,35)

\put(150,30){$\Rightarrow$}

\qbezier[40](190,5)(215,5)(240,5)
\qbezier[40](190,30)(215,30)(240,30)
\qbezier[20](190,5)(190,20)(190,30)
\qbezier[20](240,5)(240,20)(240,30) \put(213,15){A}

\put(250,65){\circle*{3}} \put(247,70){\small P}
\put(250,65){\line(-5,-3){50}} \put(250,65){\line(-1,-1){30}}
\put(250,65){\line(-2,-3){20}} \put(250,65){\line(4,-1){40}}
\put(290,55){\circle*{3}} \put(293,57){\small O}

\qbezier[45](260,5)(290,5)(320,5)
\qbezier[45](260,30)(290,30)(320,30)
\qbezier[20](260,5)(260,20)(260,30)
\qbezier[20](320,5)(320,20)(320,30) \put(288,15){B}

\put(290,55){\line(-1,-1){20}} \put(290,55){\line(0,-1){20}}
\put(290,55){\line(1,-1){20}}
\end{picture}\]
\begin{center}{\large Figure 6}\end{center}\pn
The corresponding change of bracket code is as follows:
$$\begin{array}{cccccc} (&(&\cdots&)&\cdots&)\\
&\uparrow&\text{\scriptsize A}&\uparrow&\text{\scriptsize B}&\\
&\text{\scriptsize P}&&\text{\scriptsize P}&&\end{array} \qquad
\Rightarrow
\qquad \begin{array}{cccccc} (&\cdots&(&\cdots&)&)\\
&\text{\scriptsize A}&\uparrow&\text{\scriptsize B}&\uparrow&\\
&&\text{\scriptsize O}&&\text{\scriptsize O}&\end{array}$$ If
there are $k$ vertices in the forest $A$ and $s$ inversions in its
bracket code, and if there are $l$ vertices in the forest $B$ and
$t$ inversions in its bracket code, then the left and the right
trees in Figure 6 have $s+kl+l+t$ and $s+k+kl+t$ inversions in
their bracket codes, respectively. As $n=k+l+2$, then numbers
$s+kl+l+t$ and $s+k+kl+t$ have the same parity. \end{proof}

\section{Transpositions}

\begin{prop} A transposition changes the parity of a pr-tree.
\end{prop}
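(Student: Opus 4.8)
The plan is to read off the effect of a single transposition on the bracket code of $T$ and then to verify, by a direct count, that it changes the number of inversions by exactly one.

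First I would pin down the short fragment of the code that is affected. Since $B$ has degree one, its unique visit contributes a consecutive pair "$()$" to the code; and since the walk meets $A,B,A,C$ with $C$ the very next vertex after the return to $A$, nothing stands between the closing bracket of $B$ and the bracket written when the walk moves on to $C$. In the situation of Figure 3 that next step is the first visit to $C$, so the code of $T$ contains the consecutive three-symbol fragment "$()($", whose third symbol is the opening bracket of $C$. Everything to the left of the opening bracket of $B$, and everything from the opening bracket of $C$ onward (in particular all of $C$'s subtree and its closing bracket), is arbitrary but plays no further role.

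Next I would describe the code of the transposed tree $T'$. Once $B$ is slid along the edge $AC$ and reattached to $C$, the walk on $T'$ coincides with the walk on $T$ except that, instead of making the $B$-excursion from $A$, it first descends to $C$ and makes the $B$-excursion from there. Hence the code of $T'$ is obtained from that of $T$ by replacing, in the same three positions, the fragment "$()($" by "$(()$": the opening bracket of $C$ jumps to the front of this three-symbol block, and no other symbol moves.

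Finally I would carry out the inversion count, which is the only real content. Inside the three-symbol block, "$()($" has exactly one inversion (the closing bracket of $B$ precedes the opening bracket of $C$), whereas "$(()$" has none. Any bracket $Z$ outside the block forms inversions with the block only according to whether $Z$ is opening or closing and whether $Z$ lies to the left or to the right of the block — because the block always consists of two opening brackets and one closing bracket, in whatever order — and none of this data is altered by the transposition; inversions lying entirely in the complement of the block are plainly unchanged. So the total number of inversions decreases by exactly one, and the parity of the tree is reversed. The other configuration of Figure 2, in which the vertex $C$ reached from $A$ is the parent of $A$, is handled in the same way: there the affected fragment is "$())$", which the transposition turns into "$)()$", again a rearrangement in place of a fixed three-symbol block of brackets that changes the inversion count by one. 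The only point needing care is the bookkeeping in this last step — that the affected block really is this small (this uses "$B$ has degree one" and "$A,B,A,C$ consecutive") and that no inversion can be created or destroyed straddling the block (this uses that a transposition merely permutes a fixed block of brackets) — but neither of these is deep.
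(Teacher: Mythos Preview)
Your proof is correct and follows essentially the same approach as the paper: you identify the same two local three-symbol fragments, ``$()($'' $\to$ ``$(()$'' and ``$())$'' $\to$ ``$)()$'', and observe that the inversion count changes by one. The paper states this last point without justification, whereas you spell out explicitly why inversions straddling the block are unaffected (the multiset of brackets in the block is preserved), which is a welcome addition.
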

\begin{proof} We can see in the Figure 2 that there are two possibilities:
either we meet $C$ for the first time in the walk around the tree,
or we meet $A$ for the last time. The change of the bracket code
(see Figure 3) in the first case is as follows:
$$\begin{array}{ccccc} \cdots&(&)&(&\cdots\\ &\uparrow&\uparrow&\uparrow&\\
&\text{\scriptsize B}&\text{\scriptsize B}&\text{\scriptsize C}&
\end{array} \quad \Rightarrow\quad
\begin{array}{ccccc} \cdots&(&(&)&\cdots\\ &\uparrow&\uparrow&\uparrow&\\
&\text{\scriptsize C}&\text{\scriptsize B}&\text{\scriptsize B}&
\end{array}$$ and in the second ---
$$\begin{array}{ccccc} \cdots&(&)&)&\cdots\\ &\uparrow&\uparrow&\uparrow&\\
&\text{\scriptsize B}&\text{\scriptsize B}&\text{\scriptsize A}&
\end{array} \quad \Rightarrow\quad
\begin{array}{ccccc} \cdots&)&(&)&\cdots\\ &\uparrow&\uparrow&\uparrow&\\
&\text{\scriptsize A}&\text{\scriptsize B}&\text{\scriptsize B}&
\end{array}$$ In both cases the number of inversions changes by 1.
\end{proof}

\section{Sums}
\pn Here we will study the sum of two trees $T_1$ and $T_2$ (see
Definition 1.4).
\begin{prop} If a tree $T_1$ has an even number of vertices $k$ and a
tree $T_2$ also has an even number of vertices $l$, then their sum
is odd only when one of summands is odd and another --- even.
\end{prop}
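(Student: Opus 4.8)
The plan is to compute the number of inversions of the sum $T$ directly, by rooting $T$ at one endpoint of the edge that splits it, and to reuse the counting formula that already appears in the proof of Proposition 2.1.

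First I would set up notation: let $T$ be the sum, so that $T$ has an edge $e=uv$ with $u$ a vertex of $T_1$ and $v$ a vertex of $T_2$, and deleting $e$ disconnects $T$ into $T_1$ (the component of $u$) and $T_2$ (the component of $v$). Root $T$ at $u$. The one structural fact I need is that in this rooted tree $v$ is a child of $u$, and the pr-subtree of $T$ growing out of $v$ is exactly $T_2$ re-rooted at $v$, with no extra vertices, because $e$ is the unique edge of $T$ joining $T_1$ to $T_2$; the remaining children of $u$ together with their subtrees are precisely the level-one subtrees of $T_1$ re-rooted at $u$.

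Next I would invoke the formula from the proof of Proposition 2.1: a pr-tree whose level-one subtrees have sizes $k_1,\dots,k_s$ and inversion counts $x_1,\dots,x_s$ has $\sum_i x_i+\sum_{i<j}k_ik_j$ inversions. For $T$ rooted at $u$, whose level-one subtrees are those of $T_1$ rooted at $u$ — say of sizes $p_1,\dots,p_r$ with $\sum_i p_i=k-1$ and inversion counts $y_1,\dots,y_r$ — together with $T_2$ rooted at $v$, of size $l$ and inversion count $t$, this gives $\mathrm{inv}(T)=\sum_i y_i+t+\sum_{i<j}p_ip_j+(k-1)l$, the placement of the $T_2$-block among the children being irrelevant since the cross term is symmetric. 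Applying the same formula to $T_1$ rooted at $u$ gives $\mathrm{inv}(T_1\text{ rooted at }u)=\sum_i y_i+\sum_{i<j}p_ip_j$, hence $\mathrm{inv}(T)=\mathrm{inv}(T_1\text{ rooted at }u)+t+(k-1)l$.

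Finally, since $l$ is even the term $(k-1)l$ is even, so $\mathrm{inv}(T)\equiv\mathrm{inv}(T_1\text{ rooted at }u)+t\pmod 2$; and since $k$ and $l$ are even, Proposition 2.2 lets me replace $\mathrm{inv}(T_1\text{ rooted at }u)$ by the parity of $T_1$ and $t$ by the parity of $T_2$. Thus $\mathrm{inv}(T)\equiv\mathrm{inv}(T_1)+\mathrm{inv}(T_2)\pmod 2$, which is exactly the assertion that $T$ is odd precisely when one of $T_1,T_2$ is odd and the other even. I do not anticipate a genuine obstacle: the argument is a short bookkeeping computation layered on Propositions 2.1 and 2.2, and the only point that deserves a moment's care is the structural observation that the subtree of $T$ at $v$ really is $T_2$ itself and not $T_2$ with something attached — which is immediate once one recalls that $e$ is the only edge between the two parts.
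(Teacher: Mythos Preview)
Your argument is correct and follows essentially the same route as the paper: root $T$ at the $T_1$-endpoint of the connecting edge, express the inversion count of $T$ as $\mathrm{inv}(T_1)+\mathrm{inv}(T_2)$ plus an even cross term, and invoke Proposition~2.2 to make the parities root-independent. The only cosmetic difference is that you obtain the cross term $(k-1)l$ by explicitly unpacking the level-one formula from Proposition~2.1, whereas the paper writes the cross term as $kl$; both are even since $l$ is even, so the conclusion is unaffected.
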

\begin{proof} Let $T$ be the sum of $T_1$ and $T_2$ and let $e$ be the
edge, connecting $T_1$ and $T_2$. $T$ has $k+l$ vertices --- an
even number, hence, by Proposition 2.2 we can choose any vertex as
the root vertex. So the root vertex $O$ will be the vertex of
$T_1$, incident $e$. By Proposition 2.1 any edge, incident to $O$,
can be chosen as a marked edge, so the marked edge will be the
next after $e$ in going counterclockwise around $O$ (i.e. in the
walk around $T$ we at first make the walk around $T_1$). \pmn The
number of inversions is $s+t+kl$, where $s$ and $t$ are number of
inversions in bracket codes of $T_1$ and $T_2$, respectively. It
remains to note that $kl$ is even.
\end{proof}

\section{Rotation group}
\pn Here we will consider trees with an even number of vertices
$n$. $R(T)$ will be the rotation group of a tree $T$ (see
Definition 1.5).
\begin{prop} Even and odd trees with $n$ vertices have different
rotation groups. \end{prop}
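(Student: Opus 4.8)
The plan is to isolate a single conjugation-invariant property of $R(T)$ that separates even trees from odd ones: whether $R(T)$ is contained in the alternating group $A_{n-1}$ (a tree on $n$ vertices has $m=n-1$ edges, so $R(T)\subseteq S_{n-1}$). Since $A_{n-1}$ is normal in $S_{n-1}$, the condition ``$R(T)\subseteq A_{n-1}$'' is preserved under conjugation and also under interchanging the two colours, hence depends only on the tree. If I can show that, for a fixed even $n$, every even tree satisfies this condition and no odd tree does (or vice versa), then the rotation group of an even tree cannot be conjugate to that of an odd tree, so in particular the two groups are different. One preliminary remark: because $n$ is even, $m=n-1$ is odd, so the $m$-cycle $s_ws_b$ is an even permutation; hence $\mathrm{sgn}(s_w)=\mathrm{sgn}(s_b)$, and $R(T)\subseteq A_{n-1}$ holds exactly when $s_w$ (equivalently $s_b$) is an even permutation.

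The core step is to prove that a transposition (Definition 1.3) reverses the truth value of ``$R(T)\subseteq A_{n-1}$''. A transposition slides a vertex $B$ of degree one, together with its edge $e$, from a vertex $A$ to an adjacent vertex $C$. Since $A$ and $C$ are adjacent they receive opposite colours, so $B$ changes colour. Now track the effect on the two generators: in the generator associated with the old colour of $B$, the cyclic block around $A$ (a cycle whose length is the degree of $A$, which is at least $2$) loses the edge $e$ and so becomes a cycle one shorter; in the generator associated with the new colour of $B$, the cyclic block around $C$ gains the edge $e$ and so becomes a cycle one longer; the one-cycle corresponding to the leaf $B$ is present, unchanged, before and after; and all other cyclic blocks of $s_w$ and of $s_b$ are untouched. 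Shortening or lengthening a single cycle by one multiplies its sign by $-1$, so both $\mathrm{sgn}(s_w)$ and $\mathrm{sgn}(s_b)$ change sign; by the preliminary remark this toggles membership in $A_{n-1}$.

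To finish, I assemble the conclusion. By Proposition 3.1 a transposition also changes the parity of a pr-tree. Thus the two $\{0,1\}$-valued quantities ``parity of $T$'' and ``$R(T)\subseteq A_{n-1}$'' are flipped simultaneously by every transposition, so their sum modulo $2$ is a transposition invariant. A transposition does not change the number of vertices, and any pr-tree on $n$ vertices can be carried to the chain-tree by a finite sequence of transpositions; hence this invariant equals its value on the chain-tree for every $n$-vertex tree. Consequently, for a fixed even $n$, all even trees lie on one side of the dichotomy ``$R(T)\subseteq A_{n-1}$ or not'' and all odd trees on the other, so an even tree and an odd tree have non-conjugate, hence different, rotation groups.

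I expect the real work to be in the middle paragraph: one must check carefully that, under a transposition, exactly one cyclic block of $s_w$ and exactly one of $s_b$ are altered, and only by deleting or inserting the single edge $e$, handling the degenerate cases (when $A$ has degree $2$, or when $C$ was itself a leaf), and one must make sure the relocated leaf really does switch colour. The sign statement for $s_ws_b$ and the propagation of the invariant along chains of transpositions are then routine, given Proposition 3.1 and the reduction to the chain-tree. (Alternatively, one can bypass transpositions: a short combinatorial argument shows the number of inversions equals $\binom{n}{2}-\sum_v\mathrm{depth}(v)$ — a pair of vertices yields an inversion precisely when their bracket intervals are disjoint, i.e.\ when neither vertex is an ancestor of the other — and $\sum_v\mathrm{depth}(v)\equiv b\pmod 2$ with $b$ the number of black vertices, while $\mathrm{sgn}(s_b)=(-1)^{n-1-b}$ gives $R(T)\subseteq A_{n-1}$ iff $b$ is odd; this reproduces the same dichotomy directly.)
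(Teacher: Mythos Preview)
Your approach is essentially the paper's own: show that a single transposition toggles the condition $R(T)\subseteq A_{n-1}$ (using that $s_ws_b$ is an odd-length cycle, hence even, so $\mathrm{sgn}(s_w)=\mathrm{sgn}(s_b)$), combine with Proposition~3.1, and connect every tree to the chain-tree. One small slip: in your middle paragraph ``old colour'' and ``new colour'' are interchanged --- since $B$ was adjacent to $A$, its old colour is opposite to $A$'s (hence equal to $C$'s), so it is the generator for the \emph{new} colour of $B$ that contains the cycle around $A$, and the generator for the \emph{old} colour that contains the cycle around $C$; the conclusion that both $\mathrm{sgn}(s_w)$ and $\mathrm{sgn}(s_b)$ flip is unaffected. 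Your parenthetical alternative via $\binom{n}{2}-\sum_v\mathrm{depth}(v)$ is also correct and in fact recovers the paper's post-proposition Conclusion about which parity lands inside $A_{n-1}$ according to $n\bmod 4$.
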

\begin{proof} If $T$ is a tree with $n$ vertices, then it has an
odd number of edges $m=n-1$. Let us consider any bipartite
structure on $T$ (there are two of them). The permutation
$t=s_ws_b$ is an $m$-cycle, hence it is an even permutation, thus,
permutations $s_w$ and $s_b$ are both odd or both even. Let $U$ be
a tree, obtained from $T$ by one transposition: a vertex of degree
1 with attached edge moves from white vertex to the nearest black
one. The trees $T$ and $U$ have different parities by Proposition
3.1. After the transposition the number of even-length cycles in
the cyclic presentation of $s_w$ changes by one and the number of
even-length cycles in the cyclic presentation of $s_b$ is also
changes by one. Thus, rotations groups of trees of one parity
belong to the alternating group $A_m$ and rotation groups of trees
of another parity do not belong to $A_m$. \end{proof} \pmn {\bf
Conclusion} \emph{If $n\equiv 2\text{ mod }4$, then rotation
groups of even trees with $n$ vertices belong the alternating
group. If $n\equiv 0\text{ mod }4$, then rotation groups of odd
trees belong to the alternating group.}
\begin{proof} Let consider the chain-tree (see Figure 4) with $n$
vertices. This tree is even. It has one white vertex of degree
one, one black vertex of degree one, $(n-2)/2$ white vertices of
degree 2 and $(n-2)/2$ black vertices of degree 2. Permutations
$s_w$ and $s_b$ are even, if the number $(n-2)/2$ is even. It
remains to note that By Remark 1.4 any tree can be transformed in
the chain tree by a sequence of transpositions.
\end{proof}
\begin{rem} Let $T$ and $U$ be trees with $n$ vertices, one even and one
odd. Then their passports as bipartite trees are different.
Indeed, one tree has even number of white vertices of even degrees
and even number of black vertices of even degrees and another has
odd number of white vertices of even degrees and odd number of
black vertices of even degrees. \end{rem}

\section{Clean trees}
\begin{prop} If two trees $T$ and $U$ with an even number $n$ of
vertices have different parities, then rotation groups of trees
$T^c$ and $U^c$ are different.\end{prop}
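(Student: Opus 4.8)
The plan is to show that the conjugacy class of $R(T^c)$ inside $S_{2(n-1)}$ still ``remembers'' the conjugacy class of $R(T)$ inside $S_{n-1}$, so that Proposition~5.1 applies. Put $m=n-1$ and fix one of the two proper $2$-colorings of $T$. First I would observe that the edges of $T^c$ are exactly the $2m$ flags (incident vertex--edge pairs) of $T$, split into the $m$ flags at white vertices and the $m$ flags at black vertices, each half being canonically identified with $E(T)$. Under this identification $s_w$ of $T^c$ preserves the two halves and acts on them as $s_w$ of $T$ and $s_b$ of $T$ respectively, while $s_b$ of $T^c$ (all black vertices of $T^c$ have degree $2$) is the involution swapping the two halves edge by edge. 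Hence $R(T^c)$ is the subgroup of the wreath product $S_m\wr S_2$ generated by $a=s_w^{T}\sqcup s_b^{T}$ and the swap $\tau$, and its index-$2$ subgroup $K=R(T^c)\cap(S_m\times S_m)=\langle a,\tau a\tau\rangle$ surjects onto $R(T)=\langle s_w^{T},s_b^{T}\rangle$ under each coordinate projection; that is, $K$ is a subdirect subgroup of $R(T)\times R(T)$.

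Next I would invoke Proposition~5.1 in the form extracted from its proof: of the two parity classes of trees on $n$ vertices, exactly one consists of trees with $R(T)\subseteq A_m$, and for a tree $T$ in the complementary class both $s_w^{T}$ and $s_b^{T}$ are odd permutations (their signs coincide since the product is an $m$-cycle with $m$ odd, and they cannot both be even or the whole group would lie in $A_m$). Therefore $K\subseteq A_m\times A_m$ when $T$ is in the first class, whereas $K\not\subseteq A_m\times A_m$ (already $a\notin A_m\times A_m$) when $T$ is in the second. The separating statement is then: \emph{the index-$2$ subgroup of $R(T^c)$ attached to its imprimitivity system with two blocks of size $m$ lies in the ``alternating part'' $A_m\times A_m$ of that system}. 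If this is a property of the conjugacy class of $R(T^c)$ in $S_{2m}$, then for $T$ and $U$ of different parities it holds for exactly one of $R(T^c),R(U^c)$, so these groups are not conjugate and in particular not equal.

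The main obstacle is precisely the ``if'' above: I must rule out that $R(T^c)$ has other imprimitivity systems with two blocks of size $m$, since only then are $K$ and the relevant copy of $A_m\times A_m$ genuinely intrinsic to $R(T^c)$. I would approach this by analyzing block systems directly: the ``halves'' system is singled out because $K$ acts on each half with image $R(T)$, which is transitive by connectedness of $T$ (just as $\langle s_w,s_b\rangle$ is transitive on the edges of any tree), and a common-refinement argument shows that a competing two-block system would force $R(T)$ to be imprimitive on $E(T)$ compatibly with the first system, which fails at least whenever $m$ is prime and, I expect, for the groups that actually occur. An equivalent block-free phrasing of the invariant is ``$R(T^c)$ has an index-$2$ subgroup that is a subdirect product of two copies of a permutation group contained in $A_m$,'' which is manifestly conjugacy-invariant; with that phrasing the nontrivial point is again to show no such subgroup exists when $R(T)\not\subseteq A_m$, which reduces to the same control of block systems. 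Everything else --- the combinatorial identification of $s_w^{T^c}$ and $s_b^{T^c}$ and the sign bookkeeping --- is routine.
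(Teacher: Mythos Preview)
Your route is essentially the paper's: both split the $2m$ edges of $T^c$ into the halves $N_w,N_b$ coming from the bipartition of $T$, note that $s_w^{T^c}$ acts on those halves as $(s_w^T,s_b^T)$ while $s_b^{T^c}$ is the edge-by-edge swap, and then separate the two parity classes by whether $s_w^T,s_b^T$ both lie in $A_m$. The paper packages the invariant as ``$R(T^c)\subset G\cup tG$'' with $G=A_wA_b$ and closes with the single sentence ``it remains to note that this result does not depend on the enumeration of $T^c$ edges''---which is exactly the conjugacy-invariance point you flag as the main obstacle, so you are not missing anything the paper actually supplies.

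That obstacle, however, dissolves more easily than the block-system analysis you sketch. The group $R(T^c)$ contains the $(2m)$-cycle $t=s_ws_b$; a block for any transitive group containing a full cycle is automatically a block for that cyclic subgroup, hence a $\langle t^d\rangle$-orbit for some $d\mid 2m$. There is thus a \emph{unique} system with two blocks of size $m$, namely the pair of $\langle t^2\rangle$-orbits, and this is precisely $\{N_w,N_b\}$ (the paper itself observes $e\in N_w\Leftrightarrow t(e)\in N_b$). Once the two-block system is intrinsic to $R(T^c)$, your invariant ``the index-$2$ block-kernel $K$ lies in $A_m\times A_m$'' (equivalently, the paper's ``$R(T^c)\subset G\cup tG$'') is a genuine conjugacy invariant, and both your argument and the paper's go through without the heavier primitivity considerations you anticipated.
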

\begin{proof} Let us consider some enumeration of $T^c$ edges.
As the permutation $t=s_ws_b$ is a $(2n-2)$-cycle, then it is odd.
As $s_b$ is a product of $n-1$ --- and odd number, of $2$-cycles,
then it is odd also, thus $s_w$ is even. \pmn The tree $T$ has its
own bipartite structure with black and white vertices (in $T^c$
all these vertices are white). We will call them pw-vertices
(previously white) and pb-vertices (previously black). Each edge
of $T$ is divided by black vertex in two edges of $T^c$. Thus,
half of $T^c$ edges are incident to pw-vertices and half --- to
pb-vertices. Let $N_w$ be the set of $T^c$ edges, incident to
pw-vertices, and $N_b$ be the set of $T^c$ edges, incident to
pb-vertices. As the permutation $s_w$ permutes separately elements
of $N_w$ and $N_b$, then it generates two permutations: the
permutation $\sigma_w$ of the set $N_w$ and the permutation
$\sigma_b$ of the set $N_b$. As $s_w=\sigma_w\sigma_b$, then
permutations $\sigma_w$ and $\sigma_b$ are both even or both odd.
A transposition on $T$ changes the degree of a pw-vertex by one and
the degree of a pb-vertex by one also, thus, it simultaneously
changes parities of $\sigma_w$ and $\sigma_b$. \pmn
Let $A_w$ ($A_b$) be the alternating group of permutations of the
set $N_w$ ($N_b$) and let $G=A_wA_b$. Then $s_w$ belongs to $G$
for trees of one parity and does not belong to $G$ for trees of another.
As $e\in N_w$ only when $t(e)\in N_b$, then $t^2\in G$. As
$R(T^c)=\langle t,s_w\rangle$, then $R(T^c)\subset tG\cup G$ for
trees of one parity and $R(T^c)\not\subset tG\cup G$ for trees of
another. It remains to note that this result does not depend on
the enumeration of $T^c$ edges.  \end{proof}

\begin{center}{\Large Supplement 1. Characteristic
polynomial}\end{center} \pmn One can ask a natural question: can
the spectrum of on odd tree be the same as the spectrum of an even
tree? Two non-isomorphic trees with the same characteristic
polynomial are called \emph{cospectral} (see \cite{DH}). \pmn We
can easily find two cospectral trees with characteristic
polynomial $x^8-7x^6+9x^4$
\[\begin{picture}(160,40) \multiput(0,20)(15,0){4}{\circle*{2}}
\put(15,5){\circle*{2}} \put(15,35){\circle*{2}}
\put(30,5){\circle*{2}} \put(30,35){\circle*{2}}
\put(0,21){\line(1,0){45}} \put(15,5){\line(0,1){30}}
\put(30,5){\line(0,1){30}} \put(70,18){\small and}
\multiput(115,20)(15,0){4}{\circle*{2}} \put(105,10){\circle*{2}}
\put(105,30){\circle*{2}} \put(125,10){\circle*{2}}
\put(125,30){\circle*{2}} \put(115,21){\line(1,0){45}}
\put(105,10){\line(1,1){20}} \put(105,30){\line(1,-1){20}}
\end{picture}\] but both of them are even.
\pmn However, there exit two cospectral trees with twelve vertices
with characteristic polynomial
$x^{12}-11x^{10}+42x^8-66x^6+39x^4-6x^2$.
\[\begin{picture}(340,25) \multiput(0,5)(15,0){9}{\circle*{2}}
\multiput(220,5)(15,0){9}{\circle*{2}} \put(0,6){\line(1,0){120}}
\put(220,6){\line(1,0){120}} \put(160,3){\small and}
\put(15,20){\circle*{2}} \put(30,20){\circle*{2}}
\put(60,20){\circle*{2}} \put(235,20){\circle*{2}}
\put(280,20){\circle*{2}} \put(295,20){\circle*{2}}
\put(15,5){\line(0,1){15}} \put(30,5){\line(0,1){15}}
\put(60,5){\line(0,1){15}} \put(235,5){\line(0,1){15}}
\put(280,5){\line(0,1){15}} \put(295,5){\line(0,1){15}}
\end{picture}\] The tree on the left is even and the tree on the
right is odd.

\begin{center}{\Large Supplement 2. Graph of trees}\end{center}
\pmn For an even $n$ we can define the graph $G_n$. The set of
vertices of $G_n$ is the set of all pairwise non-isomorphic trees
with $n$ vertices. Two vertices of $G_n$ are adjacent if a
transposition transforms one of the corresponding trees into
another. $G_n$ will be called the \emph{graph of trees}. By
Proposition 3.1 $G_n$ is bipartite. \pmn For $n=6,8$ we can
construct this graphs. The graph $G_6$ is quite simple.
\[\begin{picture}(330,80) \put(0,48){\circle*{2}}
\put(0,32){\circle*{2}} \put(5,40){\circle*{2}}
\put(10,48){\circle*{2}} \put(10,32){\circle*{2}}
\put(15,40){\circle*{2}} \qbezier(0,32)(5,40)(10,48)
\qbezier(0,48)(5,40)(10,32) \put(5,40){\line(1,0){10}}

\multiput(60,40)(8,0){4}{\circle*{2}} \put(68,48){\circle*{2}}
\put(68,32){\circle*{2}} \put(60,40){\line(1,0){24}}
\put(68,32){\line(0,1){16}}

\multiput(120,65)(8,0){5}{\circle*{2}} \put(136,73){\circle*{2}}
\put(120,65){\line(1,0){32}} \put(136,65){\line(0,1){8}}

\multiput(120,5)(10,0){4}{\circle*{2}} \put(130,13){\circle*{2}}
\put(140,13){\circle*{2}} \put(120,5){\line(1,0){30}}
\put(130,5){\line(0,1){8}} \put(140,5){\line(0,1){8}}

\multiput(200,40)(8,0){5}{\circle*{2}} \put(208,48){\circle*{2}}
\put(200,40){\line(1,0){32}} \put(208,40){\line(0,1){8}}

\multiput(290,40)(8,0){6}{\circle*{2}}
\put(290,40){\line(1,0){40}}

\linethickness{0.7mm} \put(30,40){\line(1,0){15}}
\qbezier(92,49)(102,54)(112,59) \qbezier(92,31)(102,26)(112,21)
\qbezier(165,21)(175,26)(185,31) \qbezier(165,59)(175,54)(185,49)
\put(255,40){\line(1,0){15}}
\end{picture}\] The graph $G_8$ has 23 vertices and 37 edges.
All pairwise non-isomorphic trees are presented below.
\[\begin{picture}(325,40) \put(0,15){\line(1,1){20}}
\put(0,35){\line(1,-1){20}} \put(10,13){\line(0,1){24}}
\put(10,25){\line(1,0){13}} \put(8,3){\small a}

\put(50,15){\line(1,1){20}} \put(50,35){\line(1,-1){20}}
\put(48,25){\line(1,0){36}} \put(72,25){\circle*{2}}
\put(60,3){\small b}

\put(105,15){\line(1,1){20}} \put(105,35){\line(1,-1){20}}
\put(115,25){\line(1,0){15}} \put(130,26){$\diagup$}
\put(130,19){$\diagdown$} \put(123,3){\small c}

\put(160,15){\line(1,1){20}} \put(160,35){\line(1,-1){20}}
\put(170,25){\line(1,0){36}} \put(182,25){\circle*{2}}
\put(194,25){\circle*{2}} \put(180,3){\small d}

\put(225,25){\line(1,0){48}} \put(249,13){\line(0,1){12}}
\put(249,25){\line(1,1){10}} \put(249,25){\line(-1,1){10}}
\put(237,25){\circle*{2}} \put(261,25){\circle*{2}}
\put(247,3){\small e}

\put(295,25){\line(1,0){30}} \put(305,15){\line(0,1){20}}
\put(315,15){\line(0,1){20}} \put(309,3){\small f}
\end{picture}\]
\[\begin{picture}(300,40) \put(0,25){\line(1,0){30}}
\put(10,15){\line(0,1){20}}  \put(30,26){$\diagup$}
\put(30,18){$\diagdown$} \put(20,25){\circle*{2}}
\put(20,3){\small g}

\put(60,25){\line(1,0){40}} \put(70,15){\line(0,1){20}}
\put(80,25){\line(0,1){10}} \put(90,25){\circle*{2}}
\put(80,3){\small h}

\put(120,25){\line(1,0){30}} \put(140,15){\line(0,1){20}}
\put(130,25){\circle*{2}} \put(150,18){$\diagdown$}
\put(150,26){$\diagup$} \put(140,3){\small i}

\put(180,25){\line(1,0){50}} \put(190,15){\line(0,1){20}}
\multiput(200,25)(10,0){3}{\circle*{2}} \put(205,3){\small j}

\put(250,25){\line(1,0){50}} \put(270,15){\line(0,1){20}}
\put(260,25){\circle*{2}} \put(280,25){\circle*{2}}
\put(290,25){\circle*{2}} \put(275,3){\small k}
\end{picture}\]
\[\begin{picture}(300,40) \put(0,25){\line(1,0){40}}
\put(20,9){\line(0,1){24}} \put(10,25){\circle*{2}}
\put(30,25){\circle*{2}} \put(20,17){\circle*{2}}
\put(25,3){\small l}

\put(60,25){\line(1,0){40}}
\multiput(70,25)(10,0){3}{\line(0,1){10}} \put(77,3){\small m}

\put(120,25){\line(1,0){50}}
\multiput(130,25)(10,0){2}{\line(0,1){10}}
\put(150,25){\circle*{2}} \put(160,25){\circle*{2}}
\put(145,3){\small n}

\put(190,25){\line(1,0){40}} \put(200,25){\circle*{2}}
\put(220,25){\circle*{2}} \put(210,25){\line(0,-1){10}}
\put(210,8){$\diagdown$} \put(201,8){$\diagup$} \put(208,3){\small
o}

\put(250,25){\line(1,0){50}} \put(260,25){\circle*{2}}
\put(290,25){\circle*{2}}  \put(270,25){\line(0,1){10}}
\put(280,25){\line(0,1){10}} \put(273,3){\small p}
\end{picture}\]
\[\begin{picture}(280,30) \put(0,15){\line(1,0){50}}
\put(10,15){\line(0,1){10}} \put(30,15){\line(0,1){10}}
\put(20,15){\circle*{2}} \put(40,15){\circle*{2}}
\put(23,3){\small q}

\put(70,15){\line(1,0){50}} \put(80,15){\line(0,1){10}}
\put(110,15){\line(0,1){10}} \put(90,15){\circle*{2}}
\put(100,15){\circle*{2}} \put(93,3){\small r}

\put(140,15){\line(1,0){60}} \put(150,15){\line(0,1){10}}
\multiput(160,15)(10,0){4}{\circle*{2}} \put(168,3){\small s}

\put(220,15){\line(1,0){60}} \put(240,15){\line(0,1){10}}
\multiput(250,15)(10,0){3}{\circle*{2}} \put(230,15){\circle*{2}}
\put(248,3){\small s}
\end{picture}\]
\[\begin{picture}(210,40) \put(0,35){\line(2,-1){20}}
\put(0,15){\line(2,1){20}} \put(20,25){\line(1,0){20}}
\put(10,20){\circle*{2}} \put(10,30){\circle*{2}}
\put(30,25){\circle*{2}} \put(18,3){\small u}

\put(60,25){\line(1,0){60}} \put(90,25){\line(0,1){10}}
\multiput(70,25)(10,0){2}{\circle*{2}}
\multiput(100,25)(10,0){2}{\circle*{2}} \put(88,3){\small v}

\put(140,25){\line(1,0){70}}
\multiput(150,25)(10,0){6}{\circle*{2}}  \put(173,3){\small w}
\end{picture}\] It turns out that the graph $G_8$ is planar:
\[\begin{picture}(270,230) \put(100,10){\circle{10}}
\multiput(100,100)(0,30){5}{\circle{10}} \put(99,8){\scriptsize k}
\put(99,99){\scriptsize n} \put(98,128){\scriptsize m}
\put(99,159){\scriptsize g} \put(99,188){\scriptsize h}
\put(99,219){\scriptsize d}

\multiput(160,40)(0,30){4}{\circle{10}} \put(159,39){\scriptsize
p} \put(159,69){\scriptsize q} \put(159,98){\scriptsize t}
\put(159,128){\scriptsize j} \put(130,100){\circle{10}}
\put(129,98){\scriptsize v}

\multiput(100,105)(0,30){4}{\line(0,1){20}}
\put(100,15){\line(0,1){80}}
\multiput(160,45)(0,30){3}{\line(0,1){20}}
\multiput(105,100)(30,0){2}{\line(1,0){20}}
\put(104,12){\line(2,1){52}} \put(104,102){\line(2,1){52}}
\put(104,98){\line(2,-1){52}} \put(104,158){\line(2,-1){52}}

\multiput(190,70)(0,30){2}{\circle{10}} \put(220,100){\circle{10}}
\put(189,68){\scriptsize r} \put(189,98){\scriptsize s}
\put(218,98){\scriptsize w}
\multiput(165,70)(0,30){2}{\line(1,0){20}}
\put(190,75){\line(0,1){20}} \put(195,100){\line(1,0){20}}

\put(105,190){\line(1,0){60}} \put(165,115){\oval(160,150)[r]}
\put(105,115){\oval(320,210)[r]}

\put(40,100){\circle{10}} \put(39,99){\scriptsize i}
\put(44,102){\line(2,1){52}} \put(42,104){\line(2,3){55}}
\put(42,96){\line(2,-3){55}}

\multiput(40,10)(30,0){2}{\circle{10}}
\multiput(40,190)(30,0){2}{\circle{10}}
\multiput(45,10)(30,0){2}{\line(1,0){20}}
\multiput(45,190)(30,0){2}{\line(1,0){20}} \put(39,9){\scriptsize
o} \put(69,9){\scriptsize u} \put(39,189){\scriptsize c}
\put(69,188){\scriptsize f}

\put(44,192){\line(2,1){52}} \put(40,15){\line(0,1){80}}
\put(40,105){\line(0,1){80}} \put(0,100){\circle{10}}
\put(-1,99){\scriptsize e} \put(5,100){\line(1,0){30}}
\put(20,145){\circle{10}} \put(19,144){\scriptsize b}
\put(3,103){\line(1,3){13}} \put(23,148){\line(1,3){13}}
\put(20,55){\circle{10}} \put(19,53){\scriptsize l}
\put(3,97){\line(1,-3){13}} \put(23,52){\line(1,-3){13}}

\put(0,190){\circle{10}} \put(-1,189){\scriptsize a}
\put(3,187){\line(1,-3){13}}
\end{picture}\] However, the graph $G_{10}$ with 106 vertices and 238
edges is not planar. In order to demonstrate its non-planarity we
will construct a subgraph, homeomorphic to the graph $K_{3,3}$
(see \cite{Ha}), i.e. to the bipartite graph
\[\begin{picture}(40,30) \multiput(0,5)(20,0){3}{\circle*{2}}
\multiput(0,25)(20,0){3}{\circle*{2}}
\multiput(0,5)(20,0){3}{\line(0,1){20}}
\multiput(0,5)(20,0){2}{\line(1,1){20}}
\multiput(0,25)(20,0){2}{\line(1,-1){20}}
\put(0,5){\line(2,1){40}} \put(0,25){\line(2,-1){40}}
\end{picture}\]\pn 1. Let us consider trees $a$, $b$ and $c$ up to
isomorphism
\[\begin{picture}(195,40) \put(0,18){\small\emph a:}
\put(15,20){\line(1,0){30}} \put(25,10){\line(0,1){20}}
\put(35,10){\line(0,1){20}} \put(45,20){\line(2,1){10}}
\put(45,20){\line(2,-1){10}}

\put(70,18){\small\emph b:} \put(85,20){\line(1,0){30}}
\put(95,10){\line(0,1){20}} \put(105,4){\line(0,1){32}}
\put(105,12){\circle*{2}} \put(105,28){\circle*{2}}

\put(130,18){\small\emph c:} \put(145,20){\line(1,0){30}}
\put(155,20){\circle*{2}} \put(165,20){\line(0,1){10}}
\put(175,20){\line(2,1){20}} \put(185,25){\circle*{2}}
\put(175,20){\line(2,-1){10}} \put(185,15){\line(1,0){8}}
\put(185,15){\line(0,-1){8}} \end{picture}\] Each of these trees
can be transformed by one transposition into the tree $d$
\[\begin{picture}(55,40) \put(15,20){\line(1,0){20}}
\put(25,10){\line(0,1){20}} \put(35,20){\line(2,1){20}}
\put(45,25){\circle*{2}} \put(35,20){\line(2,-1){10}}
\put(45,15){\line(1,0){8}} \put(45,15){\line(0,-1){8}}
\put(0,18){\small\emph d:}
\end{picture}\] Trees $a$ and $b$ can be transformed by one
transposition into the tree $e$
\[\begin{picture}(55,30) \put(15,15){\line(1,0){20}}
\put(25,5){\line(0,1){20}} \put(35,15){\line(2,1){20}}
\put(45,20){\circle*{2}} \put(35,15){\line(2,-1){10}}
\put(35,5){\line(0,1){20}} \put(0,12){\small\emph
e:}\end{picture}\] but it takes three transpositions to transform
the tree $c$ into $e$:
\[\begin{picture}(345,40) \put(0,20){\line(1,0){30}}
\put(10,20){\circle*{2}} \put(20,20){\line(0,1){10}}
\put(30,20){\line(2,1){20}} \put(40,25){\circle*{2}}
\put(30,20){\line(2,-1){10}} \put(40,15){\line(1,0){8}}
\put(40,15){\line(0,-1){8}}

\put(75,18){$\Rightarrow$}

\put(105,20){\line(1,0){40}} \put(115,20){\circle*{2}}
\put(125,20){\line(0,1){10}} \put(135,4){\line(0,1){32}}
\put(135,12){\circle*{2}} \put(135,28){\circle*{2}}

\put(170,18){$\Rightarrow$}

\put(200,20){\line(1,0){30}} \put(210,20){\circle*{2}}
\put(220,20){\line(0,1){10}} \put(230,10){\line(0,1){20}}
\put(230,20){\line(2,1){20}} \put(240,25){\circle*{2}}
\put(230,20){\line(2,-1){10}}

\put(275,18){$\Rightarrow$}

\put(305,20){\line(1,0){20}} \put(315,10){\line(0,1){20}}
\put(325,20){\line(2,1){20}} \put(335,25){\circle*{2}}
\put(325,20){\line(2,-1){10}} \put(325,10){\line(0,1){20}}
\end{picture}\] The tree $a$ can be transformed by one
transposition into the tree $f$
\[\begin{picture}(55,30) \put(15,15){\line(1,0){40}}
\put(25,5){\line(0,1){20}} \put(35,15){\line(0,1){10}}
\put(45,5){\line(0,1){20}} \put(0,12){\small\emph
f:}\end{picture}\] It takes three transpositions to transform the
tree $c$ into the tree $f$:
\[\begin{picture}(355,40) \put(0,20){\line(1,0){30}}
\put(10,20){\circle*{2}} \put(20,20){\line(0,1){10}}
\put(30,20){\line(2,1){20}} \put(40,25){\circle*{2}}
\put(30,20){\line(2,-1){10}} \put(40,15){\line(1,0){8}}
\put(40,15){\line(0,-1){8}}

\put(75,18){$\Rightarrow$}

\put(105,20){\line(1,0){50}} \put(115,20){\circle*{2}}
\put(125,20){\line(0,1){10}} \put(135,10){\line(0,1){20}}
\put(145,20){\line(0,1){10}}

\put(180,18){$\Rightarrow$}

\put(210,20){\line(1,0){50}} \put(220,20){\circle*{2}}
\put(230,20){\line(0,1){10}} \put(240,20){\line(0,1){10}}
\put(250,10){\line(0,1){20}}

\put(285,18){$\Rightarrow$}

\put(315,20){\line(1,0){40}} \put(325,10){\line(0,1){20}}
\put(335,20){\line(0,1){10}} \put(345,10){\line(0,1){20}}
\end{picture}\] And it takes nine transpositions to transform the
tree $b$ into the tree $f$:
\[\begin{picture}(350,100) \put(0,15){\line(1,0){40}}
\put(10,5){\line(0,1){20}} \put(20,15){\line(0,1){10}}
\put(30,5){\line(0,1){20}}

\put(55,12){$\Leftarrow$}

\put(80,15){\line(1,0){30}} \put(90,5){\line(0,1){20}}
\put(100,15){\circle*{2}} \put(104,9){\line(1,1){12}}
\put(104,21){\line(1,-1){12}}

\put(130,12){$\Leftarrow$}

\put(160,15){\line(1,0){20}} \put(153,16){$\diagdown$}
\put(153,8){$\diagup$} \put(168,15){\line(0,1){10}}
\put(174,9){\line(1,1){12}} \put(174,21){\line(1,-1){12}}

\put(200,12){$\Leftarrow$}

\put(230,15){\line(1,0){30}} \put(223,16){$\diagdown$}
\put(223,8){$\diagup$} \put(240,15){\circle*{2}}
\put(244,9){\line(1,1){12}} \put(244,21){\line(1,-1){12}}

\put(275,12){$\Leftarrow$}

\put(300,15){\line(1,0){40}} \put(310,15){\circle*{2}}
\put(318,15){\line(0,1){10}} \put(324,9){\line(1,1){12}}
\put(324,21){\line(1,-1){12}}

\put(0,80){\line(1,0){30}} \put(10,70){\line(0,1){20}}
\put(20,64){\line(0,1){32}} \put(20,72){\circle*{2}}
\put(20,88){\circle*{2}}

\put(55,78){$\Rightarrow$}

\put(80,70){\line(1,1){10}} \put(80,90){\line(1,-1){10}}
\put(90,80){\line(1,0){10}} \put(100,64){\line(0,1){32}}
\put(100,80){\line(1,1){10}} \put(100,80){\line(1,-1){10}}
\put(100,72){\circle*{2}} \put(100,88){\circle*{2}}

\put(130,78){$\Rightarrow$}

\put(150,70){\line(1,1){10}} \put(150,90){\line(1,-1){10}}
\put(160,80){\line(1,0){25}} \put(164,74){\line(1,1){12}}
\put(164,86){\line(1,-1){12}} \put(178,80){\circle*{2}}

\put(200,78){$\Rightarrow$}

\put(220,70){\line(1,1){10}} \put(220,90){\line(1,-1){10}}
\put(230,80){\line(1,0){15}} \put(245,70){\line(0,1){20}}
\put(235,70){\line(1,1){20}} \put(235,90){\line(1,-1){20}}

\put(275,78){$\Rightarrow$}

\put(300,80){\line(1,0){35}} \put(310,80){\circle*{2}}
\put(320,80){\circle*{2}} \put(335,70){\line(0,1){20}}
\put(325,70){\line(1,1){20}} \put(325,90){\line(1,-1){20}}

\put(320,45){$\Downarrow$}
\end{picture}\] So the subgraph of $G_{10}$ which has all above
trees as vertices and whose edges correspond to above mentioned
transpositions is homeomorphic to $K_{3,3}$.

\vspace{5mm}

\vspace{5mm}
\end{document}